\documentclass[12pt]{article}
\usepackage[colorlinks]{hyperref}
\usepackage{color}
\usepackage{graphicx}
\usepackage{graphics}
\usepackage{makeidx}
\usepackage{showidx}
\usepackage{latexsym}
\usepackage{amssymb}
\usepackage{verbatim}
\usepackage{amsmath}
\usepackage{amsthm}
\usepackage{amsfonts}
\usepackage{amssymb,amsmath}
\usepackage{latexsym,amsthm,amscd}
\usepackage[all]{xy}

\newtheorem{prop}{Proposition}[section]
\newtheorem{defn}{Definition}[section]
\newcounter{alphthm}
\setcounter{alphthm}{0}

\newtheorem{proposition}[alphthm]{Proposition}


\newtheorem{rem}{Remark}[section]

\newtheorem{ex}{Example}[section]
\newtheorem{thm}{Theorem}
\newtheorem{lem}{Lemma}[section]
\newcommand{\be}{\begin{equation}}
\newcommand{\ee}{\end{equation}}
\newcommand{\ben}{\begin{enumerate}}
\newcommand{\een}{\end{enumerate}}
\newcommand{\beq}{\begin{eqnarray}}
\newcommand{\eeq}{\end{eqnarray}}
\newcommand{\beqn}{\begin{eqnarray*}}
\newcommand{\eeqn}{\end{eqnarray*}}

\newcommand{\pa}{\partial}
\newcommand{\bpf}{\begin{proof}}
\newcommand{\epf}{\end{proof}}
\newcommand{\bl}{\begin{lem}}
\newcommand{\el}{\end{lem}}
\newcommand{\bp}{\begin{prop}}
\newcommand{\ep}{\end{prop}}
\newcommand{\bd}{\begin{defn}}
\newcommand{\ed}{\end{defn}}
\newcommand{\bt}{\begin{thm}}
\newcommand{\et}{\end{thm}}
\newcommand{\bg}{\begin{ex}}
\newcommand{\eg}{\end{ex}}
\newcommand{\br}{\begin{rem}}
\newcommand{\er}{\end{rem}}
\newcommand{\R}{I\!\! R}

\newcommand\bpr{\begin{prop}}
\newcommand\epr{\end{prop}}

\newcommand\y{\textbf{y}}

\numberwithin{equation}{section}

\title{On the geometry of Zermelo's optimal control trajectories}
\author {Z. Fathi and B. Bidabad \footnote{The corresponding author; bidabad@aut.ac.ir; behroz.bidabad@math.univ-toulouse.fr}}
\date{}
\begin{document}
\maketitle
\begin{abstract}
In the present work, we study the optimal control paths in the Zermelo navigation problem from the geometric and differential equations point of view rather than the optimal control point of view, where the latter has been carried out in our recent work.  Here, we obtain the precise form of the system of ODE where the solutions are optimal trajectories of Zermelo's navigation problem.
Having a precise equation allows optimizing a cost function more accurately and efficiently.
The advantage of these equations is to approximate optimal trajectories in the general case by the first order approximation of external fields $w$. The latter could be solved numerically since we have retrieved simpler equations for these paths. \noindent\\
\end{abstract}
	\emph{AMS subject Classification 2010}: 53C60, 53C44.\\
\emph{Keywords:} Optimal control; Zermelo navigation; Finsler; Randers metric; Geodesic.

\section{Introduction}

Optimal control problems naturally arise in  engineering, especially in robotics and aerospace, where the path optimization is required. The goal in an optimal control problem is to determine a control function that minimizes the objective function (also called the total cost function)  to minimize physical quantities such as time, route, and/or other relevant costs of traveling.

\par Zermelo's navigation problem is considered to be one of the most well-known time-optimal control problems. Zermelo's navigation is a classic problem in the calculus of variations which dates back to Zermelo's notes in 1931.

\par Consider a moving object (boat, vessel, plane, etc.) that is traveling in a stream of an external field (such as  wind or water current) is to reach a set destination in the shortest time possible. Zermelo's navigation problem refers to the characterization (finding/calculation) of the optimal paths.

\par In Zermelo's navigation problem in the Riemannian setting, the underlying space in which the navigation occurs can be modeled by a Riemannian manifold and an external natural (perturbing) force given by a vector field $w$ on this manifold. The problem of Zermelo navigation in Riemannian geometry is studied in detail in \cite{BRSH}. The problem was further studied with several extensions in the papers \cite{BaTS}, \cite{AEWB} among other essential works. In the said works, a beautiful link to geometry is verified. To wit, it is shown that Zermelo's navigation with weak external fields (vector fields $w$ with $|w|<1$) yield paths which are geodesics of specific Randers type Finsler metrics. We will see more of this later.

\par In~\cite{FB}, the authors studied Zermelo's navigation problem with and without
(moving) obstacles from a metric geometric and optimal control point of view  and with a look towards improving existing computational methods. In particular, we proposed a modification
of the optimization scheme previously considered  in~\cite{LCKTC} by adding a piece-wise constant rotation. This fact was shown to produce exact paths in some cases hence was an improvement upon the existing methods. See~\cite{FB} for further details. In~\cite{BR}, the second named author and Rafie-Rad studied the time-optimal trajectories in a non-obstacle pursuit problem, which geometrically shares similarities with the Zermelo navigation problem.

The authors in \cite{LTZD} solve navigation problems as a minmax
problem without obstacles to obtain the optimal time by using a classical penalty function method.
They then proceed to allow for longer times of travel (which are at most a small percentage over the optimal time) and in presence of fixed obstacles.

The paper \cite{LKTLD} considers an optimal PID control problem subject
to continuous inequality constraints and terminal state equality constraint; it was
shown that the problem can be solved via solving a sequence of nonlinear optimization
problems. An efficient computational method was proposed. It was then
applied to a ship steering control problem. The results obtained show that the
method proposed is reliable and effective.

In \cite{JLYTD}, the authors minimize time of travel with continuous
inequality constraints. The stopping time is determined by a smooth
hypersurface.
Several approaches for finding optimal trajectories using wind forecast data have been introduced in the
literature, including analytical optimal control, \cite{JMRBAE}, \cite{MABE}.

Our main goal in this manuscript is to investigate the systems of ordinary differential equations which produce optimal paths. This is useful since having a precise equation to solve numerically might sometimes be easier than optimizing a cost function.

\par Classically, Zermelo's navigation problem occurs in 2D because it was first introduced to navigate the movement of ships in the sea. However, the formulation is not bounded to 2D. For example, in~\cite{FBN}, the navigation of Dubin's airplane in the presence of fixed and moving obstacles is studied, which is a 3D version of Zermelo's navigation problem in the presence of obstacles.

\par As was alluded to, Zermelo's navigation in Riemannian manifolds is closely tied to Randers type Finsler metrics and their geodesics. We will explain this further in the next section, but for now, it is worth mentioning that these geodesics are completely classified in~\cite{CR} in the case where the resulting Randers type metric is of constant flag curvature.

\bt \label{Th; geod} Let $w$ be a weak dynamic linear vector field perturbing a movement in $ \mathbb{R}^2$.
The differential equation of optimal control paths (geodesics)
 starting from initial position to its destination are given by the equation  \ref{geo1}.
\et
These ODEs are useful since the optimal trajectories in the general case can be approximated by the first order  approximation of external fields $w$. 
\section{Background Material}
\subsection{Finsler structures}\label{sec:Finsler}

\par The basic notions and definitions of Finsler geometry can be found in standard texts and  many papers, so we will not review them in detail. We will stick to the notations and definitions used in~\cite{FB}.

\par To be concise, a Finsler structure on a smooth manifold is a base point and direction dependent norm $F$ on the tangent space which is $C^\infty$ smooth away from the zero section of the tangent bundle and which satisfies the positive homogeneity and convexity properties; namely, if we denote a base point by $x$ and tangent vectors by $\left(x, {\bf y}  \right) \in T_x M$, the Finsler metric $F$ must satisfy
\emph{Regularity away from zero section}: $F$ is $C^\infty$ on the entire slit tangent bundle $TM_0$ which is the tangent bundle minus the zero section.
	\emph{Positive homogeneity}: $F(x,\lambda {\bf y})=\lambda F(x, {\bf y})$ for all $ \lambda >0$.
	 \emph{Strong convexity away from zero section}: the $n\times n$ \emph{Hessian} matrix
	$(g_{ij}):=\dfrac{\partial^2 F^2}{2\partial y^i\partial y^j}$, is positive-definite at every point of $TM_0$.
Given such a manifold $ M $ and Finsler structure $F$ on its tangent space, $TM$. The pair $(M,F)$ is known as a \emph{Finsler space} or a \emph{Finsler manifold}. We emphasize that $F$ depends on the vector ${\bf y} \in \R^n$ as well as on the base point $x \in M$. See the standard textbook~\cite{BCSH} for more details.
\par Among Finsler metrics, Randers type metrics are important natural objects in that they are obtained from Riemannian metrics by adding a linear term namely; a \emph{Randers} type metric is a Finsler metric of the form
$
	F(x, {\bf y}) := \alpha(x, {\bf y}) +  \beta(x, y),
$
where $\alpha(x, y):=\sqrt{a_{ij}(x)y^iy^j}$ is a Riemannian metric and $\beta(x, y):=b_i(x)y^i$ is a one form. The assumption $a(b, b) < 1$ is required to ensure the positivity of $F$. This simple condition also guarantees that the metric is strongly convex. That is, the Hessian $g_{ij}(x, y):= ( \frac{1}{2}F^2)_{y^iy^j}$ is positive definite for all nonzero ${\bf y} = y^i\frac{\pa}{\pa x^i}\in T_xM$.
\par A geodesic is a generalization of the notion of a ``straight line" (locally shortest paths or paths with zero tangential acceleration) from Euclidean spaces (and more generally Riemannian manifolds) to Finsler ones. Just like the Riemannian case and from the calculus of variation point of view, minimizing the energy functional will provide us with the geodesic spray; the geodesic spray is the vector field on $TTM$ whose integral curves are the natural lifts of geodesics to $TM$.

\par To express the differential equations that geodesics solve, one needs to pick a connection. In a Finsler structure, the Finsler norm is not necessarily reversible, that is, opposite vectors might have different lengths. Because of this fact, it is necessary to make the difference between (locally) forward length minimizing and (locally) backward length minimizing curves. Throughout these notes, a Finsler geodesic means a forward geodesic (locally forward length minimizing) and a minimizing geodesic is a forward geodesic that minimizes the forward length between its endpoints.
\par The generalization, to the Finsler setting, of the unit sphere bundle is the indicatrix which is the set $S_F:=\lbrace {\bf y}\in V ;F(\y)=1\rbrace$ (point dependent unit ball in norm). The fibers of the indicatrix are easily verified to be  closed and convex subsets enclosing the origin which never passes through the origin. For simplicity, these will be refered to as the unit tangent spheres where there is no risk of confusion.
\subsection{Geodesics of Randers type metrics}
As was mentioned, the study of  time-optimal paths in Zermelo's navigation problem entails the study of geodesics of a suitably defined Randers type metric. So first, let us review how to compute geodesics of a Randers type metric.
\subsubsection{Geodesic spray and equations}
The geodesics in Riemannian and Finsler settings can be approached in various ways. One way is to consider geodesics (along with the tangent vector) as integral curves of a vector field $G$ on the tangent bundle (in the Riemannian setting) or the slit tangent bundle (in the Finsler setting). This vector field is known as the geodesic spray. It is easy to see that, the geodesic spray $G$ is of the form $ G = y^i \frac{\partial}{\partial x_i} - 2G^i\frac{\partial}{\partial y^i}, $
in which the coefficients $G^i$ are called geodesic spray coefficients.
\par It is, by now, standard that \emph{unit speed} geodesics in a Finsler structure $\left( M, F \right)$ are given by the Finsler geodesic spray with coefficients
\begin{align*}
	G^i = \frac{1}{4}g^{il}\left( \left[ F^2 \right]_{x^ky^l} y^k - \left[ F^2 \right]_{x^l} \right) = \frac{1}{4}g^{il}\left( 2\left( g_{jl} \right)_{x^k} - \left( g_{jk} \right)_{x^l} \right)y^jy^k.
\end{align*}
For more details, see~\cite{BCSH}.

\par Once the geodesic spray coefficients are known (in local coordinates), the geodesic equations (for the unit speed geodesic $\gamma$) can be written in coordinates by
\[
\dot{\gamma} = \left(x,y^i\right), \quad \ddot{\gamma} = \left(\dot{\gamma}, y^i, -2G^i   \right)
\]

\par A coordinate-free equation can be obtained by using a suitable connection. We will touch upon this  below.

\subsection{The Randers metric resulting from Zermelo's navigation}\label{subsec:randers}

It is, by now, standard that the optimal paths in Zermelo's navigation problem are geodesics of a Randers type Finsler metric. This geometric aspect helps us to study the optimal paths from various angles, i.e., from the point of view of Finsler geometry, metric geometry and differential equations, in addition to the point of view of optimal control theory.

Indeed the following provides the geometric picture.
\begin{proposition}[Shen~\cite{Sh-1}]\label{prop:Randers}
	Let $M$ be a smooth manifold, suppose the fast vessel travels from the starting point of the vector $\y$ to its end with unit speed and the external factor $w$ produces an effect on $M$ such that $\Vert w\Vert < 1$. The new Finsler structure $F$ on the tangent bundle that measures time motion is
	\begin{equation}\label{eq:Randers}
		F(\y)=\left( 1-\Vert w\Vert ^2   \right)^{-1}\Big(\big(\langle \y,w\rangle ^2+(1-\Vert w\Vert ^2)\Vert \y\Vert ^2\big)^{\frac{1}{2}} - \langle \y,w\rangle \Big)
	\end{equation}
	where $\Vert \cdot \Vert= \langle \cdot , \cdot \rangle^{\frac{1}{2}}$ is the Riemannian norm.
\end{proposition}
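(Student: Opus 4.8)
The plan is to read $F$ off from the kinematics of the navigation problem: at each point $F$ must be the Minkowski gauge of the set of displacements the vessel can realise in unit time, and an explicit formula will then come out of a quadratic equation. So I would first fix $x\in M$ and describe the admissible velocities. Writing $\gamma$ for the vessel's trajectory, at each instant $\dot\gamma=w(\gamma)+u$, where $u$ is the navigator's self-velocity with $\Vert u\Vert\le 1$ (the self-speed is at most $1$, and since time is being minimised it will always be exactly $1$); hence the velocities available at $x$ fill the ball $B_x:=\{\,w(x)+u:\Vert u\Vert\le 1\,\}\subset T_xM$, a unit ball centred at $w(x)$. Because $\Vert w(x)\Vert<1$, the origin lies in the interior of $B_x$, so $B_x$ is a convex body with $0$ in its interior, and its boundary sphere $\{\,\y:\Vert\y-w(x)\Vert=1\,\}$ is a legitimate indicatrix in the sense of Section~\ref{sec:Finsler}. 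I then define $F(x,\y)$ to be the gauge of $B_x$: for $\y\ne 0$, the unique $t>0$ with $\y/t\in\partial B_x$, equivalently the unique $F=F(x,\y)>0$ with $\Vert\y-F\,w\Vert=F$. Positive $1$-homogeneity of $F$ in $\y$ is immediate from this defining relation.

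Squaring it and expanding with $\langle\cdot,\cdot\rangle$ gives $(1-\Vert w\Vert^2)F^2+2\langle\y,w\rangle F-\Vert\y\Vert^2=0$; its discriminant $\langle\y,w\rangle^2+(1-\Vert w\Vert^2)\Vert\y\Vert^2$ is positive for $\y\ne0$ exactly because $\Vert w\Vert<1$, and the unique positive root is precisely formula~\eqref{eq:Randers} (the same positivity gives $F>0$ off the zero vector). To see that $F$ measures time, I would fix an oriented curve $\sigma:[0,1]\to M$ from $p$ to $q$: any traversal of $\sigma$ has the form $t\mapsto\sigma(s(t))$ with $s$ increasing, $s(0)=0$, $s(T)=1$, and velocity $s'(t)\,\sigma'(s(t))\in B_{\sigma(s(t))}$; since $F$ is the gauge of $B$, admissibility forces $s'(t)\le 1/F(\sigma(s),\sigma'(s))$, hence $T=\int_0^1\frac{ds}{s'}\ge\int_0^1 F(\sigma(s),\sigma'(s))\,ds$, with equality when full self-speed is used throughout. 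Thus the least traversal time of $\sigma$ is $\int_\sigma F$, and minimising over curves from $p$ to $q$ shows the time-optimal paths are the $F$-geodesics.

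It remains to confirm that $F$ is a genuine (Randers) Finsler metric. In coordinates one verifies $F=\alpha+\beta$ with $\alpha=\sqrt{a_{ij}y^iy^j}$, $\beta=b_iy^i$, where, writing $\lambda:=1-\Vert w\Vert^2$ and $w_i:=h_{ij}w^j$ with $h_{ij}$ the components of $\langle\cdot,\cdot\rangle$,
\[
a_{ij}=\frac{h_{ij}}{\lambda}+\frac{w_iw_j}{\lambda^2},\qquad b_i=-\frac{w_i}{\lambda}.
\]
A Sherman--Morrison computation gives $a^{ij}=\lambda\,(h^{ij}-w^iw^j)$ and hence $a(b,b)=a^{ij}b_ib_j=\Vert w\Vert^2$, so $a(b,b)<1\iff\Vert w\Vert<1$; by the criterion recalled in Section~\ref{subsec:randers} this makes the Hessian $g_{ij}$ positive-definite, so $F$ is strongly convex and smooth away from the zero section. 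I expect the step requiring the most care to be the passage from the pointwise reachable-set picture to the identity that travel time equals $\int_\sigma F$ --- in particular, the verification that full self-speed is never suboptimal, which again rests on $\Vert w\Vert<1$; solving the quadratic and checking the Randers identities are routine.
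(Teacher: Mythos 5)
Your proof is correct and follows essentially the same route as the source: the paper itself offers no argument for this proposition, delegating it to Shen and Bao--Robles--Shen, and the derivation you give --- $F$ as the Minkowski gauge of the unit ball translated by $w$, solved from the quadratic $(1-\Vert w\Vert^2)F^2+2\langle \y,w\rangle F-\Vert \y\Vert^2=0$ --- is precisely the standard proof found in those references. Your identification of $a_{ij}$ and $b_i$ matches the expressions the paper uses later in Section 3, and the check $a(b,b)=\Vert w\Vert^2<1$ correctly supplies strong convexity, so the argument is complete.
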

\begin{proof}
	See~\cite{Sh-1} for a proof of Proposition~\ref{prop:Randers}; c.f.~\cite{BRSH}.
\end{proof}

Further details and investigations can be found, among other places, in~\cite{Sh-1} and~\cite{FB}.

\section{Precise ODE that optimal trajectories satisfy}

\par As we saw, the optimal trajectories are indeed forward minimizing geodesics of an specific Randers type Finsler metric. In this section, we aim to compute the geodesic equations for general external forces in the 2D case when the underlying manifold is the flat $\R^2$. Let $u$ be an internal field with constant magnitude $\| u \| = 1$ which derives the ship.

\par In this case, our only free parameter is the angle of the vector field $u$. Suppose $w$ is the external flow force of the form $w = \left( w^1, w^2 \right)$. We assume the external force $w$ is weak, that is, we assume $\| w \| < 1$.

\par With this setup, the movement of the ship is governed by the ODE system
\begin{align*}
	\begin{cases}
		\dot{x} =  \cos(\theta(t)) + w^1.\\
		\dot{y} =  \sin(\theta(t)) + w^2.
	\end{cases}
\end{align*}
The Zermelo navigation problem is to find the control $\theta$ that takes the ship from a given initial point to a given destination in the shortest time.

\par As we mentioned in Section \ref{subsec:randers} (see also \cite{CR,BRSH}), the shortest path is a minimizing geodesic of the Randers type Finsler metric in (\ref{eq:Randers}).

\par From (\ref{eq:Randers}), it is straightforward to see that the defining Riemannian metric $ a $ and 1-form $ b $ of the Randers metric are given by
\begin{align*}
a_{ij}=\frac{ \rho\delta_{ij}+w_iw_j}{\rho^2},\quad b_i=-\frac{w_i}{\rho},
\end{align*}
where $w_i:=\delta_{ij}w^j $ and where
\begin{align*}
\rho = 1 - \Vert w \Vert^2 &= 1 - (w^1)^2 - (w^2)^2.
\end{align*}

\par So in our case, we have
\begin{align*}
b_1 = - \frac{w_1}{\rho} = - \frac{w_1}{1 - (w^1)^2 - (w^2)^2},\quad
b_2 = - \frac{w_2}{\rho} = - \frac{w_2}{1 - (w^1)^2 - (w^2)^2}
\end{align*}
and
\begin{align*}
(a)_{2\times 2} &=\frac{1}{\rho^2} \begin{pmatrix} \rho + (w_1)^2 & w_1w_2 \\ w_1w_2 & \rho + (w_2)^2 \end{pmatrix} =\frac{1}{\rho^2} \begin{pmatrix} 1 - (w_2)^2 & w_1w_2 \\ w_1w_2 & 1 - (w_1)^2 \end{pmatrix}.
\end{align*}
Now we compute the geodesic spray coefficients. Notice that for us the background Riemannian metric is $\delta_{ij}$ with vanishing Christoffel symbols. Therefore the Riemannian geodesic spray coefficients are
\begin{align*}
\mathcal{G}^i = 0.
\end{align*}
A curve $ \Gamma:[0,t]\rightarrow \mathbb{R}^2 $ will be a geodesic of the Randers metric  $ F $ if it satisfies the geodesic equation
\begin{align*}
\ddot{\Gamma}^i+2G^i(\Gamma,\dot{\Gamma})=\frac{d}{dt}(\ln F(\dot{\Gamma}))\dot{\Gamma}^i,
\end{align*}
(see \cite{BCSH}). The geodesic coefficients of $ F $ are related to those of the Riemannian metric $ a $ by $ (11.3.12) $ of \cite{BCSH}. As a result the geodesic coefficients of $ F $ are related to the those of $ g_{_{Euc}} $ by
\begin{align*}
G^i=\mathcal{G}^i+\zeta^i,
\end{align*}
where
\begin{align*}
\zeta^i &= \frac{1}{4}\left( \frac{y^i}{F} - w^i \right) \left( 2FS_0 - L_{00} - F^2L_{ww}\right) - \frac{1}{4}F^2\left( S^i + T^i\right) - \frac{1}{2}FC^i_0 .
\end{align*}
Now we wish to compute these constituent parts in our problem presisely. Following \cite{CR} and notations therein, we set
\begin{align*}
	L_{ij}=w_{i:j}+w_{j:i},\:\: S_{i}=w^sL_{si},\:\: C_{ij}=w_{i:j}-w_{j:i},\:\: T_i=w^sC_{si},
\end{align*}
In which, the colon '$ : $' denotes covariant differentiation so that $w_{i:j}=w_{i,x^j}-w_s \gamma ^s_{ij} $ where $ \gamma ^s_{ij} $ denotes the Christoffel symbols of $g_{_{Euc}}$. Indices on these tensors are raised with the inverse of $ g_{_{Euc}} $. For example, $S^i=\delta^{ij}S_j$.
As before the subscript $ 0 $ denotes contraction with $\bf y$, $ C^i_0=\delta^{ij}C_{jk}y^k $.
Finally, $L_{_{ww}}=w^iw^jL_{ij} $.
\begin{align*}
L_{11} = 2 {w_1}_{x_1}, \quad L_{22} = 2 {w_2}_{\substack{\\x_2}},  \quad L_{12} = L_{21} = {w_1}_{\substack{\\x_2}} + {w_2}_{\substack{\\x_1}}
\end{align*}
\begin{align*}
C_{11} = C_{22} = 0 ,\quad C_{12} = {w_1}_{\substack{\\x_2}} - {w_2}_{\substack{\\x_1}}, \quad C_{21} = {w_2}_{\substack{\\x_1}} - {w_1}_{\substack{\\x_2}}
\end{align*}
\begin{align*}
S_1 = w^1L_{11} + w^2 L_{21}= 2w^1{w^1}_{\substack{\\x_1}} + w^2({w^1}_{\substack{\\x_2}} + {w^2}_{\substack{\\x_1}}).
\end{align*}
\begin{align*}
S_2 = w^1L_{21} + w^2 L_{22} = w^1({w^1}_{\substack{\\x_2}} + {w^1}_{\substack{\\x_1}} ) + 2w^2{w^2}_{\substack{\\x_2}} .
\end{align*}
\begin{align*}
T_1 = w^1C_{11} + w^2C_{21} = w^2({w^2}_{\substack{\\x_1}}  - {w^1}_{\substack{\\x_2}} ).
\end{align*}
\begin{align*}
\quad T_2 = w^1C_{12} + w^2C_{22} = w^1({w^1}_{\substack{\\x_2}}  - {w^2}_{\substack{\\x_1}} ).
\end{align*}
Now for the tangent vector $\mathbf{y} = \left(y^1, y^2 \right)$, we have
\begin{align*}
S_0 &= S_1 y^1 + S_2 y^2 \\ &= \left[ 2w^1{w^1}_{\substack{\\x_1}}  + w^2({w^1}_{\substack{\\x_2}}  + {w^2}_{\substack{\\x_1}} ) \right] y^1 \\ &+ \left[ w^1({w^1}_{\substack{\\x_2}}  + {w^2}_{\substack{\\x_1}} ) + 2w^2{w^2}_{\substack{\\x_2}}  \right] y^2,
\end{align*}
for $ C_0^i $ we have
\begin{align*}
&C_0^1=\delta^{11}C_{12}y^2=({w^1}_{\substack{\\x_2}} - {w^2}_{\substack{\\x_1}} ) y^2\\
&C_0^2=\delta^{22}C_{21}y^1=({w^2}_{\substack{\\x_1}} - {w^1}_{\substack{\\x_2}} ) y^1,
\end{align*}
and the longest ones
\begin{align*}
L_{00} &= L_{11} y^1y^1 + L_{22}y^2y^2 + 2L_{12}y^1y^2,\\
 &= 2{w^1}_{\substack{\\x_1}} (y^1)^2 + 2({w^1}_{\substack{\\x_2}}  + {w^2}_{\substack{\\x_1}} ) y^1 y^2 + 2 {w^2}_{\substack{\\x_2}}  (y^2)^2.
\end{align*}
\begin{align*}
L_{ww} &= w_1^2L_{11} + 2w_1w_2L_{12} + w_2^2L_{22}, \notag \\ &=2 (w^1)^2 {w^1}_{\substack{\\x_1}}  + 2w^1w^2({w^1}_{\substack{\\x_2}}  + {w^2}_{\substack{\\x_1}} ) + 2(w^2)^2 {w^2}_{\substack{\\x_2}}  \notag.
\end{align*}
Now we can write
\begin{align*}
G^i = \mathcal{G}^i + \zeta^i = \zeta^i,
\end{align*}
and
\begin{align*}
\zeta^i &= \frac{1}{4}\left( \frac{y^i}{F} - w^i \right) \left( 2FS_0 - L_{00} - F^2L_{ww}\right) - \frac{1}{4}F^2\left( S^i + T^i\right) - \frac{1}{2}FC^i_0 .
\end{align*}
Notice that, since the background metric is Euclidean, there is no difference between lower and upper indices. Now we need to do the calculations and simplify.
Let us compute the hardest term we need.
\begin{align*}
2FS_0 - L_{00}- F^2L_{ww} = &2F [ 2w^1{w^1}_{x_1}  + w^2({w^1}_{x_2}  + {w^2}_{x_1} )] y^1\\
&+ 2F[ w^1({w^1}_{x_2}  + {w^2}_{x_1} ) + 2w^2 {w^2}_{x_2}  ] y^2\\
&- 2{w^1}_{x_1} (y^1)^2 - 2({w^1}_{x_2}  + {w^2}_{x_1} ) y^1 y^2 - 2 {w^2}_{x_2}  (y^2)^2\\
&- F^2 ( 2 (w^1)^2 {w^1}_{x_1} + 2w^1w^2 ({w^1}_{x_2}  + {w^2}_{x_1} )\\
&+ 2(w^2)^2 {w^2}_{x_2}).
\end{align*}
and
\begin{align}\label{eq:s-and-t-1}
S_1 + T_1 &= 2w^1 {w^1}_{\substack{\\x_1}}  + 2w^2 {w^2}_{\substack{\\x_1}}  \\ \notag &= ((w^1)^2 + (w^2)^2)_{x_1}.
\end{align}
\begin{align}\label{eq:s-and-t-2}
S_2 + T_2 &= 2w^1 {w^1}_{\substack{\\x_2}}  + 2w^2{w^2}_{\substack{\\x_2}}  \\ \notag &= ((w^1)^2 + (w^2)^2)_{x_2}.
\end{align}
We have all the terms computed in this case. As one can observe, the terms involved become very tedious hence for a general external vector field $w$, the geodesic equations will become very complicated and consequently impede one's ability to  solve for optimal control trajectories numerically.

\par One remedy would be to linearize the geodesic equations (concerning $\Gamma$) and then try to solve them, but this is also difficult since as we saw  coefficients are very tedious.  A way out is to approximate the optimal trajectories by approximating the external field $w$. This is what we wish to carry out in the next section.

\section{Geodesic equations for affine nature fields}
In order to obtain more sensible equations which would be easier to solve numerically, we can first replace the external field $w$ (we also call it the nature field sometimes) with its first order approximation.

\par Indeed, on small enough rectangles in $\R^2$, we can consider the first order approximation of the weak external field $w$. Since $w$ is a weak external field (i.e. $\| w \| < 1$), its first order approximation is also weak provided that we make the domain smaller if necessary.
In this section, we will hence consider an affine weak external field and wish to obtain the optimal trajectories in Zermelo's navigation problem under this affine approximation. The original optimal trajectories will later be shown to be obtained in a limiting process.

\par Assume $w^1$ and $w^2$ are affine functions on a rectangular region $A \le x_1 \le B$ and $C \le x_2 \le D$. Therefore,
\begin{align*}
w^1 =c_1 + a_1x_1 + b_1x_2 \quad and \quad w^2= c_2 + a_2x_1 + b_2x_2.
\end{align*}

We will only focus on this region now.
\bp \label{GSpray} Suppose $w$ is an affine weak external field as in above. Then the geodesic spray coefficients that provide the optimal control trajectories (geodesics for the associated Randers metric) in the Zermelo navigation problem, are given by
 \begin{align}\label{Sp1}
	G^1(x,\mathbf{y}) &= \left( FP^1_1 \right) y^1+\left( FP^1_2 \right) y^2 + \left( L^1_1 \right) \left(y^1 \right)^2 + (L^1_2) \left(y^2 \right)^2+(Q_0^1) y^1  y^2 \notag \\
	&+ \left( \frac{A^1_0}{F} \right) \left(y^1 \right)^3+ \left( \frac{B^1_0}{F} \right) \left(y^1 \right)^2 y^2 + \left( \frac{D^1_0}{F} \right) y^1 \left(y^2 \right)^2 +F^2R_0^1.
\end{align}
And
\begin{align}\label{Sp2}
	G^2(x,\mathbf{y}) &= \left( FP^2_1 \right) y^1 +( FP^2_2 )y^2+ \left( L^2_1\right) \left(y^1 \right)^2 + L^2_2 \left(y^2 \right)^2+(Q_0^2)y^1y^2
	\notag \\
	&+ \left( \frac{A^2_0}{F} \right) \left(y^1 \right)^3+ \left( \frac{B^2_0}{F} \right) \left(y^1 \right)^2 y^2 + \left( \frac{D^2_0}{F} \right) y^1 \left(y^2 \right)^2+F^2R_0^2,
\end{align}
where $F$ is the Randers Finsler metric given in (\ref{eq:Randers}),
$L$ 's and $Q$'s are polynomials of degree 1 , $P$'s are polynomials of degree 2, $R$'s are polynomials of degree 3 all in terms of the space variables $x_i$ ($i=1,2$) and the rest of the coefficients are constants. Of course all these depend on $c_i, a_i, b_i$, $i=1,2$. See the proof for precise expression for these polynomials.
\ep
\begin{proof}

According to the formulas (\ref{eq:s-and-t-1}) and (\ref{eq:s-and-t-2}), we have

\begin{align*}
S_1 + T_1 =  2a_1(c_1+a_1x_1 + b_1x_2) + 2a_2 (c_2+a_2x_1 + b_2x_2).
\end{align*}
\begin{align*}
S_2 + T_2 =  2b_1(c_1+a_1x_1 + b_1x_2) + 2b_2 (c_2+a_2x_1 + b_2x_2).
\end{align*}
Thus $S_i + T_i$, $i=1,2$ are affine. Similarly
\begin{align*}
L_{00} = 2a_1(y^1)^2 + 2(a_2 + b_1) y^1 y^2 + 2b_2 (y^2)^2,
\end{align*}
which is a quadratic form in terms of $y^1$ and $y^2$. One also computes
\begin{align*}
 S_0 &= S_1y^1 + S_2y^2 \\ &= \left[ 2w^1w^1_{x_1} + w^2(w^1_{x_2} + w^2_{x_1}) \right] y^1 \\ & \ + \left[ w^1(w^1_{x_2} + w^2_{x_1}) + 2w^2w^2_{x_2} \right] y^2 \notag \\ &= \left[ 2(c_1+a_1x_1 + b_1x_2)a_1 + (c_2+a_2x_1+ b_2x_2)(a_2 + b_1) \right] y^1 \\ & \ + \left[ (c_1+a_1x_1 + b_1x_2)(a_2 + b_1) + 2(c_2+a_2x_1 + b_2x_2)b_2 \right] y^2 \notag \\ &= \left[ (2a_1^2 + a^2_2 + a_2 b_1)x_1 + (2a_1b_1 + a_2b_2 + b_1b_2)x_2 +(2c_1a_1+c_2a_2+c_2b_1)\right] y^1 \notag \\ & \:\: + \left[  (a_1a_2 + a_1b_1 + 2a_2b_2 )x_1 + (b_1a_2 + b_1^2 + 2b_2^2)x_2+(c_1a_2+c_1b_1+2c_2b_2) \right] y^2
\end{align*}
In addition, we have
\begin{align*}
L_{ww} &= 2 (w^1)^2w^1_{x_1} + 2w^1w^2(w^1_{x_2} + w^2_{x_1}) + 2(w^2)^2 w^2_{x_2} \notag \\ &= 2(c_1+a_1x_1 + b_1x_2)^2a_1 + 2( c_1+a_1x_1 + b_1x_2)( c_2+a_2x_1 +b_2x_2)(b_1+a_2)\\
&\:\:\:\: + 2 (c_2+a_2x_1 +b_2x_2)^2b_2, \notag
\end{align*}
is an affine translation of a quadratic form in terms of $x_i$, $i=1,2$.

\par To make the formulas simpler, we set (all $A$'s, $M$'s and $N$ below are constants)
\begin{align*}
A_1&=a_1,\: B_1=b_1 ,\: C_1=c_1,\\
A_2&=a_2,\: B_2=b_2,\: C_2=c_2, \\
A_3&=2(a_1^2+a_2^2),\: B_3=2(a_1b_1+a_2b_2), \: C_3=2(a_1c_1+a_2c_2), \\
A_4&=2(a_1b_1+a_2b_2),\: B_4=2(b_1^2+b_2^2),\: C_4= 2(b_1c_1+b_2c_2), \\
A_5&=a_2^2+b_1a_2+2a_1^2,\: B_5=a_2b_2+b_1b_2+2a_1b_1,\: C_5=2c_1a_1+c_2a_2+c_2b_1, \\
A_6&=2a_2b_2+a_1b_1+a_1a_2,\: B_6=2b_2^2+b_1^2+b_1a_2),\: C_6=c_1a_2+c_1b_1+2c_2b_2,\\
E&=2a_1,\: J=2(a_2+b_1),\: K=2b_2,\\
M_{11}&=2(2a_1^2b_1+a_1b_1b_2+a_1a_2b_2+b_1^2a_2+b_1a_2^2+2a_2b_2^2),\\
M_{02}&=2(b_1^2a_1+b_1^2b_2+a_2b_1b_2+b_2^3),\\
M_{20}&=2(a_1^3+a_1b_1a_2+a_1a_2^2+a_2^2b_2),\\
M_{00}&=2(c_1^2a_1+c_1c_2b_1+c_1c_2a_2+b_2c_2^2)\\
M_{10}&=2(2c_1a_1^2+c_1a_2b_1+a_1c_2b_1+c_1a_2^2+a_1a_2c_2+2c_2a_2b_2)\\
M_{01}&=2(2c_1a_1b_1+c_1b_2b_1+b_1^2c_2+c_1a_2b_2+b_1c_2a_2+2c_2b_2^2)
\\
N&=(b_1-a_2).
\end{align*}
So we have
\begin{align*}
w^i=C_i+ A_ix_1 + B_ix_2, \quad i=1,2,
\end{align*}
\begin{align*}
S_{(i-2)} + T_{(i-2)} = A_ix_1 + B_ix_2+C_i, \quad i=3,4,
\end{align*}
\begin{align*}
S_0 = \left( A_5 x_1 + B_5x_2 +C_5 \right) y^1+\left( A_6 x_1 + B_6x_2+C_6  \right) y^2,
\end{align*}
\begin{align*}
L_{00} = E(y^1)^2 + Jy^1 y^2 + K (y^2)^2,
\end{align*}
\begin{align*}
L_{ww} = \sum_{l+k\leq2} M_{lk} x_1^{l}x_2^k,
\end{align*}
\begin{align*}
C_0^1=Ny^2,\:\: C_0^2=-Ny^1.
\end{align*}
\begin{align*}
\zeta^1 =&\frac{1}{4}\left( \frac{y^1}{F}-(C_1+A_1x_1+B_1x_2)\right)(2F((A_5x_1+B_5x_2+C_5)y^1+
(A_6x_1\\
\quad \quad \quad &+B_6x_2+C_6)y^2)
-(E(y^1)^2+Jy^1y^2+K(y^2)^2)-F^2\left(\sum_{l+k\leq 2} M_{lk} x_1^{l}x_2^k \right))\\
&-\frac{1}{4}F^2(A_3x_1+B_3x_2+C_3)-\frac{1}{2}FNy^2.
\end{align*}
We set
\begin{align*}
&P^1_1=-\frac{1}{2}(A_1A_5x_1^2+A_1B_5x_1x_2+B_1A_5x_1x_2+B_1B_5x_2^2+(A_5C_1\\
& \quad \quad +A_1C_5)x_1 +(C_1B_5+B_1C_5)x_2+C_1C_5)-\frac{1}{4}\left(\sum_{l+k\leq 2} M_{lk} x_1^{l}x_2^k \right),\\
&L_1^1=\frac{1}{2}(A_5x_1+B_5x_2+C_5)+\frac{1}{4}(A_1Ex_1+B_1Ex_2+C_1E),
\end{align*}
\begin{align*}
&Q_0^1=\frac{1}{2}(A_6x_1+B_6x_2+C_6)+\frac{1}{4}(A_1Jx_1+B_1Jx_2+C_1J),\\
&P_2^1= -\frac{1}{2}(A_1A_6x_1^2+(A_1B_6+B_1A_6)x_1x_2+B_1B_6x_2^2+(C_1A_6+A_1C_6)x_1\\
&\quad\quad +(C_1B_6+B_1C_6)x_2+C_1C_6+N),
\end{align*}
\begin{align*}
&L_2^1=\frac{1}{4}(A_1Kx_1+B_1Kx_2+C_1K),\\
&R_0^1=\frac{1}{4}(A_1x_1\sum_{l+k\leq 2} M_{lk} x_1^{l}x_2^k+B_1x_2\sum_{l+k\leq 2} M_{lk} x_1^{l}x_2^k+C_1\sum_{l+k\leq 2} M_{lk}x_1^{l}x_2^k
\\
&\quad \quad-(A_3x_1+B_3x_2+C_3)),\\
&A_0^1=-\frac{E}{4},\\
 & B_0^1=-\frac{J}{4},\\
 & D_0^1=-\frac{K}{4},
\end{align*}
where $A_0,B_0,D_0$ are constants, $L^i_j(x_1,x_2)$ are degree one polynomials and $P^i_j(x_1,x_2)$ are degree two polynomials, and $R^k_l$ are polynomials of degree three in terms of $x_1,x_2$.

 A similar statement holds for $\zeta^2$ and upon combining these expressions, we are able to write down the geodesic spray coefficients and get the formula \ref{Sp1} and \ref{Sp2}.
\end{proof}

\par At this point, having computed the geodesic spray coefficients in Proposition \ref{GSpray}, we are in a position to obtain the precise equations for the geodesics.

Suppose $\left( x_1(t), x_2(t) \right)$ is a geodesic in the rectangular domain as set in above.  The equations $x_i$  are given as follows.
\setcounter{thm}{0}
\bt
(Extended form)
 Let $ w $ be a weak dynamic linear vector in $ \mathbb{R}^2$ of the form $ w=(w^1,w^2)=(c_1+a_1x_1+b_1x_2,c_2+a_2x_1+b_2x_2) $. The unit speed geodesic (with respect to F)
 starting from initial position to its destination is given by the formula
 \begin{align}\label{geo1}
 	&\ddot{x_1} + 2 ( \left( FP^1_1 \right) \dot{x_1}+\left( FP^1_2 \right) \dot{x_2} + \left( L^1_1 \right) \left(\dot{x_1} \right)^2 + (L^1_2) \left(\dot{x_2} \right)^2+(C_0^1) \dot{x_1}  \dot{x_2} \\ \notag
 	&+ \left( \frac{A^1_0}{F} \right) \left(\dot{x_1} \right)^3+ \left( \frac{B^1_0}{F} \right) \left(\dot{x_1} \right)^2 \dot{x_2} + \left( \frac{D^1_0}{F} \right) \dot{x_1} \left(\dot{x_2} \right)^2 +F^2R_0^1
 	) = \frac{\dot{F}}{F} \dot{x}_1.
 \end{align}
 And
 \begin{align}\label{geo2}
 	&\ddot{x_2} + 2 (\left( FP^2_1 \right) \dot{x_1} +( FP^2_2 )\dot{x_2}+ \left( L^2_1\right) \left(\dot{x_1} \right)^2 + L^2_2 \left(\dot{x_2} \right)^2+(C_0^2)\dot{x_1}\dot{x_2}
 	\\ \notag
 	&+ \left( \frac{A^2_0}{F} \right) \left(\dot{x_1} \right)^3+ \left( \frac{B^2_0}{F} \right) \left(\dot{x_1} \right)^2 \dot{x_2} + \left( \frac{D^2_0}{F} \right) \dot{x_1} \left(\dot{x_2} \right)^2+F^2R_0^2.
 	)= \frac{\dot{F}}{F} \dot{x_2}.
 \end{align}
\et
\begin{proof}
By using the geodesic spray coefficients in Proposition \ref{GSpray} we have \begin{align*}
	&\ddot{x_1} + 2 ( \left( FP^1_1 \right) y^1+\left( FP^1_2 \right) y^2 + \left( L^1_1 \right) \left(y^1 \right)^2 + (L^1_2) \left(y^2 \right)^2+(Q_0^1) y^1  y^2 \\
	&+ \left( \frac{A^1_0}{F} \right) \left(y^1 \right)^3+ \left( \frac{B^1_0}{F} \right) \left(y^1 \right)^2 y^2 + \left( \frac{D^1_0}{F} \right) y^1 \left(y^2 \right)^2 +F^2R_0^1
	) = \frac{\dot{F}}{F} \dot{x}_1,
\end{align*}
notice that with our notation, $\dot{x_i} = y^i$ so we get the formula (\ref{geo1}) and similarly (\ref{geo2}).
\end{proof}

Here, we get two nonlinear ODEs that are simpler than the geodesic equations for the general external field $w$.

\subsection{A few words on applications}
Here, we just highlight the importance of the equations we have obtained in computing the optimal trajectories. Since this is a theoretical work, we have not included numerical solutions of the equations obtained. The numerical considerations will be carried out in our upcoming works.

Let us note that since the coefficients are simpler, solving these equations by numerical methods would be an easier task than solving the geodesic equations in the general case; Also solving these equations numerically would be simpler than solving the optimal control problem (which has the same complexity as solving the geodesic problem in the general case).

When the domain of these equations are assumed to be very small, one can further simplify the equations at hand by assuming $F$ only depends on $y^i$s. This makes sense since the tangent space at a point in a Finslerian manifold is a Minkowski space in which the norm does not depend on the base point.

\vspace{5mm}
\noindent Faculty of Mathematics and Computer Sciences,
Amirkabir University of Technology (Tehran Polytechnic),
424 Hafez Ave. 15914 Tehran, Iran.

\noindent \href{mailto:bidabad@aut.ac.ir}{bidabad@aut.ac.ir};   \href{mailto:z.fathi@aut.ac.ir}{z.fathi@aut.ac.ir}  \\

\noindent  Institut de Math\'{e}matique de Toulouse, Universit\'{e} Paul Sabatier,
F-31062 Toulouse, France.

\noindent \href{mailto:behroz.bidabad@math.univ-toulouse.fr}{behroz.bidabad@math.univ-toulouse.fr}\\

\end{document}